\documentclass{article}
\usepackage{graphicx} 

\usepackage[utf8]{inputenc}
\usepackage{geometry}
\usepackage{amsmath, amssymb}
\usepackage{algorithm}[boxed,boxedruled]
\usepackage[algo2e]{algorithm2e}[linesnumbered,vlined]
\usepackage{mathrsfs}
\usepackage{amsthm}
\usepackage{float}
\usepackage{graphicx}
\usepackage{caption}
\usepackage{subcaption}
\usepackage{authblk}
\usepackage{orcidlink}
\usepackage{multicol}
\usepackage[numbers]{natbib}    

\newtheorem{theorem}{Theorem}
\newtheorem*{theorem*}{Theorem}
\newtheorem{lemma}{Lemma}


\begin{document}

\title{On the Optimality of CVOD-based Column Selection}

\author[a,\orcidlink{0000-0002-3209-4705}]{Maria Emelianenko \footnote{E-mail: memelian@gmu.edu}}
\author[a,\orcidlink{guyoldaker4@gmail.com}]{Guy B. Oldaker IV\footnote{E-mail: goldaker@gmu.edu} }

\affil[a]{Department of Mathematical Sciences, George Mason University, 4400 University Dr, Fairfax, VA 22030}

\maketitle


\begin{abstract}
While there exists a rich array of matrix column subset selection problem (CSSP) algorithms for use with interpolative and CUR-type decompositions, their use can often become prohibitive as the size of the input matrix increases.  In an effort to address these issues, the authors in \cite{emelianenko2024adaptive} developed a general framework that pairs a column-partitioning routine with a column-selection algorithm.  Two of the four algorithms presented in that work paired the Centroidal Voronoi Orthogonal Decomposition (\textsf{CVOD}) and an adaptive variant (\textsf{adaptCVOD}) with the Discrete Empirical Interpolation Method (\textsf{DEIM}) \cite{sorensen2016deim}.  In this work, we extend this framework and pair the \textsf{CVOD}-type algorithms with any CSSP algorithm that returns linearly independent columns. Our results include detailed error bounds for the solutions provided by these paired algorithms, as well as expressions that explicitly characterize how the quality of the selected column partition affects the resulting CSSP solution.
\end{abstract}

\section{Introduction}

Interpretable dimension reduction continues to be an important and active field of research.  The primary motivation stems from the fact that the popular techniques, such as principal component analysis (PCA) and methods based on the singular value decomposition (SVD), return transformed points that are linear combinations of potentially all of the singular vectors used in the projection.  Any physical meaning and/or attributes (e.g., non-negativity or sparsity) present in the original samples is lost \cite{mahoney2009cur}.  Tools like the interpolative (ID) and CUR decompositions  \cite{goreinov1997theory} \cite{dong2021simpler} address these issues by constructing matrix factorizations that utilize carefully selected rows/columns from the original data matrix.  The difficulty in forming such factorizations resides in determining which rows/columns to select, an issue referred to as the column-subset selection problem (CSSP) \cite{boutsidis2009improved}.  A diverse collection of deterministic and probabilistic algorithms exist for this task.  However, for many of these, especially those reliant on the SVD, their use becomes prohibitive as the problem size becomes large \cite{dong2021simpler}. To address this issue, the authors of \cite{emelianenko2024adaptive} developed a general framework for subdividing/distributing the CSSP task into a collection of smaller sub-tasks.  By first partitioning the columns of a matrix and then applying an existing CSSP algorithm to each piece, one is able to reduce the problem to a more manageable form that is well-suited for parallelization.  The partitioning algorithms considered therein include the Centroidal Voronoi Orthogonal Decomposition (\textsf{CVOD}) \cite{du2003centroidal} and Vector Quantized Principal Component Analysis (\textsf{VQPCA}), \cite{kambhatla1997dimension} \cite{kerschen2002non} \cite{kerschen2005distortion} as well as adaptive versions of each.  The Discrete Emprical Interpolation Method (\textsf{DEIM}) \cite{sorensen2016deim} is used to form the final CSSP solution. In the analysis presented in \cite{emelianenko2024adaptive}, it is unclear  how the quality of the resulting partition affects the resulting CSSP solution.  Moreover, the algorithms considered are all in terms of \textsf{DEIM}.  The objective of this paper is to extend the \textsf{CVOD}-type framework to be paired with any CSSP algorithm that yields linearly independent columns, and investigate the relationship between the CSSP reconstruction error and the optimality of the corresponding partitioning algorithm.  Our focus will be solely on pairing \textsf{CVOD} and the adaptive variant \textsf{adaptCVOD} developed in \cite{emelianenko2024adaptive} with other CSSP routines.  Our new frameworks will be referred to as \textsf{CVOD+CSSP} and \textsf{adaptCVOD+CSSP} respectively. The remainder of the article is organized as follows.  We begin with a review of the CSSP problem and several of the algorithms designed for its solution.  The section following covers the partitioned-based CSSP methods outlined in \cite{emelianenko2024adaptive}.  This is followed by our analysis of the partition/CSSP relationship and a conclusion.

\section{The Column-Subset Selection Problem (CSSP)}

Given $A \in \mathbb{R}^{m \times n}$ with $\mbox{rank}(A) = \rho$ and a positive integer $0<r\le \rho$, the goal of the column-subset selection problem (CSSP) \cite{boutsidis2009improved} is to form a matrix $C \in \mathbb{R}^{m \times r}$ using columns from $A$ that minimizes
$$ \|(I_m - CC^\dagger)A\|_\xi.$$
Here, $C^\dagger$ denotes the Moore-Penrose pseudoinvers of $C$ and $\xi$ is usually taken to be 2 or $F$.  The factor $C^\dagger A$ is referred to as the interpolative decomposition (ID) of $A$ with target rank $r$ \cite{dong2021simpler}.  We will refer to $\|(I_m - CC^\dagger)A\|_F$ as the ID error and CSSP error interchangeably.  

Determining solutions to the CSSP problem is a non-trivial task \cite{shitov2017column}, and several approaches exist that are devoted to its solution.  These include methods that sample columns probabilistically, for example via probabilities built using the norm of each column and leverage scores which use information from an SVD (\cite{drineas2006sampling} \cite{drineas2006subspace} \cite{drineas2008relative} \cite{wang2013improving}\cite{deshpande2006adaptive}\cite{mahoney2009cur}).  Others select columns based on information from the pivot elements that arise in classical matrix factorizations.  These include the LU factorization with partial pivoting (LUPP) \cite{trefethen2022numerical} and column-pivoted QR decomposition (CPQR) \cite{golub2013matrix}.  The \textsf{DEIM} algorithm also falls in this category, but avoids explicity performing row operations \cite{sorensen2016deim}.

\section{CVOD-based CSSP}

A number of the CSSP algorithms mentioned in the previous section become prohibitive as the problem size increases \cite{dong2021simpler},\cite{drineas2008relative}.  This is especially true of the SVD-based methods (e.g., \textsf{DEIM} and leverage scores \cite{voronin2017efficient}).  The authors in \cite{emelianenko2024adaptive} attempt to address this issue by partitioning the columns of the data matrix into Voronoi sets \cite{okabe2000spatial}.  This is followed by applying a CSSP algorithm to each piece and combining the results.  Two of the partitioning routines used in that paper include the Centroidal Voronoi Orthogonal Decomposition (\textsf{CVOD}) and a data-driven variant, \textsf{adaptCVOD}.  The next two sections are devoted reviewing \textsf{CVOD} and \textsf{adaptCVOD}.  We then present our new post-processing algorithm \textsf{PartionedCSSP}, which determines a CSSP solution given a column-partition of a matrix and a user-prescribed CSSP algorithm.


\noindent \textbf{Notation:  }Given a set $S \subset \mathbb{R}^m$ containing $n$ elements, we may also interpret $S$ as an $m \times n$ matrix with the $n$ elements set as columns.  Whether $S$ is interpreted as a set or a matrix will be clear from the context.  For $a \in \mathbb{R}$, $a>0$, we let $\lceil a \rceil$ denote the smallest integer that exceeds $a$, and $\lfloor a \rfloor$ denote the largest integer that does not exceed $a$.  The identity on $\mathbb{R}^m$ will be written as $I_m$ or $I_{m \times m}$.   We also let $\Omega_B$ denote the set of columns for a matrix $B$. Lastly, if $B_i \in \mathbb{R}^{m \times n_i}$, $i = 1,\ldots,k$ is a collection of matrices, we write $\mbox{diag}(B_i)$ to denote the block-diagonal matrix of size $km \times \sum_{i=1}^k n_i$:
$$\left ( \begin{array}{cccc}
          B_1 & & &\\
           & \ddots & &\\
           & & & B_k\\
           \end{array}\right )$$

\subsection{CVOD}

Introduced by Du et al., the Centroidal Voronoi Orthogonal Decomposition (\textsf{CVOD}) \cite{du2003centroidal} was originally conceived as a model-order reduction algorithm that combines elements of Centroidal Voronoi Tessellation  theory with the well-known Proper Orthogonal Decomposition (POD) \cite{gu1996efficient} technique \footnote{Also referred to as the Karhunen-Loeve expansion \cite{fukunaga2013introduction}}.  For a data matrix $A \in \mathbb{R}^{m \times n}$, positive integers $r,k$ and a multi-index $d = (d_1, \dots,d_k)^T \in \mathbb{N}^k$, the \textsf{CVOD} optimization problem is given by
$$\min_{\{(V_i,\Theta_i)\}_{i=1}^k}\mathcal{G}_1 \quad \mbox{such hat}$$
$$\Theta_i^2 = \Theta_i,\quad \mbox{rank}(\Theta_i) = d_i\quad i = 1,\ldots,k,$$
where the energy functional, $\mathcal{G}_1$, is defined as
$$\mathcal{G}_1 = \sum_{i=1}^k \sum_{x \in V_i}\|(I_m - \Theta_i)x\|_2^2.$$
Here, the $V_i \subset \Omega_A$ form a partition of the columns of $A$.  Solutions are found by using the generalized Lloyd method \cite{du1999centroidal} \cite{du2006convergence}, which performs alternating minimization.  Given an initial partition, $\{V_i\}_{i=1}^k$, one begins by determining the centroids, $\{\Theta_i\}_{i=1}^k$.  These are given by the matrices $U_i \in \mathbb{R}^{m \times d_i}$ that contains the top left singular vectors of $V_i$.  The next step is to update the $V_i$ (hereafter referred to as Voronoi sets) using the rule
$$x \in V_i \iff \|(I_m - U_iU_i^T)x\|_2^2 < \|(I_m - U_sU_s^T)x\|_2^2\quad i \neq s.$$

In the event ties occur, points are assigned to the Voronoi set with the smallest index.  Once these two steps are complete, the process repeats.  We stop the algorithm once the improvement in the energy functional value falls below a user-defined threshold, although other mechanisms can be used; see Algorithm \textsf{CVOD}.  When applied to the columns of a data matrix, $A$, \textsf{CVOD} will return an optimal column partitioning, $\{V_i\}_{i=1}^k$, and low-dimensional subspaces for each $V_i$.

\floatname{algorithm}{}
\renewcommand{\algorithmcfname}{}

\begin{algorithm}[H]
\renewcommand{\thealgorithm}{}      
\caption{\centering \textbf{Algorithm:} $\mathsf{CVOD}$}\label{alg:CVOD}
\SetAlgoRefName{\textsf{CVOD}}
\KwData{A matrix $A \in \mathbb{R}^{m \times n}$, with rank($A$) = $\rho$, a positive integer $r<\rho$, a positive integer $0<k\le m$, a multi-index of dimensions, $d = \{d_i\}_{i=1}^k$ with $\sum_{i=1}^k d_i = r$, and a positive tolerance parameter, $\epsilon$}
\KwResult{A collection, $\{V_i,U_i\}_{i=1}^{k}$, consisting of a column partitioning of $A$, and a set of lower dimensional representations of each partition. }


\vspace{0.5\baselineskip}

$\{V_i\}_{i=1}^k \leftarrow $ Randomly partition the columns of $A$\\
$j \leftarrow 1$\\
$\Delta^{j-1} \leftarrow \epsilon + 1$

\While{$\Delta^{j-1} > \epsilon$}{
    $(\{U_i\}_{i=1}^k,k) \leftarrow \mathsf{UpdateCentroidsFixed} \left (\{V_i\}_{i=1}^k, d \right )$\\    
    $ \{V_i\}_{i=1}^k \leftarrow \mathsf{FindVoronoiSets} \left (\{V_i\}_{i=1}^k,\{U_i\}_{i=1}^k\right )$
    
    $\mathcal{G}^{j} \leftarrow \sum_{i=1}^k\sum_{x \in V_i}\|(I_m-U_iU_i^T)x\|_2^2$\\
    \If{$j<2$}{
        $\Delta^{j} \leftarrow \Delta^{j-1}$
    }\Else{
        $\Delta^{j} \leftarrow \mathcal{G}^{j-1} - \mathcal{G}^{j}$
    }
    $j \leftarrow j + 1$
}
return $\{V_i,U_i\}_{i=1}^k$
\end{algorithm}

\subsection{Adaptive CVOD}

The \textsf{adaptCVOD} algorithm presented in \cite{emelianenko2024adaptive} is a data-driven version of \textsf{CVOD}.  As we show below, the difference between the two is in how they perform centroid updates.  Using the same inputs as \textsf{CVOD} except without the multi-index, the \textsf{adaptCVOD} optimization problem is given by
$$\min_{\{(V_i,\Theta_i)\}_{i=1}^k}\mathcal{G}_1\quad \mbox{such that}$$
$$\Theta_i^2 = \Theta_i,\quad \sum_{i=1}^k\mbox{rank}(\Theta_i) = r,\quad \bigcup_{i=1}^kV_i = \Omega_A.$$
The energy functional is the same for both algorithms.  However, the constraint, $\sum_{i=1}^k\mbox{rank}(\Theta_i) = r$, differs from that in \textsf{CVOD} and reflects the global approach to minimizing $\mathcal{G}_1$.  Recall that \textsf{CVOD} performs a series of local minimizations
$$\min_{\theta_i \in \mathbb{R}^{m \times d_i}}\|(I_m - \Theta_i)V_i\|_F,\quad \Theta_i^2 = \Theta_i,\quad i = 1,\ldots,k.$$
The adaptive variant instead solves
\begin{eqnarray*}
    \min_{\Phi}\|\mathsf{diag}(V_i) - \Phi \mathsf{diag}(V_i)\|^2_F&s.t.& \Phi^2 = \Phi \in \mathbb{R}^{km \times km}\\
    & & \mbox{rank}(\Phi) = r,\\
\end{eqnarray*}
which is optimal \cite{li2020elementary}.

\floatname{algorithm}{}
\begin{algorithm}[H]
\renewcommand{\thealgorithm}{}
\caption{\centering \textbf{Subroutine: }\textsf{FindVoronoiSets}}
\label{alg:findvoronoisets}
\KwData{A data matrix $A \in \mathbb{R}^{m \times n}$, with rank($A$) = $\rho$, and a set of generalized centroids, $\{U_i\}_{i=1}^k.$}
\KwResult{$\{V_i\}_{i=1}^k$, where the $V_i$ form an updated partition of the columns of $A$.}


\vspace{0.5\baselineskip}

$\Omega \leftarrow $set of column vectors of $A$\\
$k \leftarrow $Number of centroids, $U_i$\\
$V_i \leftarrow \emptyset,\; i = 1,\ldots,k$

\For{$ x \in \Omega$}{
    \For{$i = 1,\ldots,k$}{
       $d_i \leftarrow \| x - U_iU_i^Tx \|_2^2$
    }
    Assign $x$ to $V_i$ with $d_i < d_j\; i \neq j$
}
return $\{V_i\}_{i=1}^k$
\end{algorithm}

  The solution to this problem is $\mbox{diag}(U_iU_i^T)$, where $U_i$ contains the left singular vectors of $V_i$ that make up part of the rank-$r$ SVD of $\mbox{diag}(V_i)$.  Minus this change, the minimization steps are exactly the like those for \textsf{CVOD}.

The data-driven componenet of \textsf{adaptCVOD} appears when one or more of the $V_i$ have no left singular vectors that contribute to the dominant $r$-dimensional column space of $\mbox{diag}(V_i)$.  For example, we may have a situation where the rank-$r$ left singular matrix for $\mbox{diag}(V_i)$ looks like
$$\left ( \begin{array}{cccc}
          U_1U_1^T & & &\\
           & \ddots & &\\
           & & U_{k-1}U_{k-1}^T & \\
           & & & 0\\
           \end{array}\right )$$
           
Should this happen, the number of Voronoi sets, $k$, is reduced to match the number of centroids involved in this subspace.  This change allows the dimension and number of Voronoi sets to adjust to the data.    The \textsf{CVOD} pseudocode can be modified to suit \textsf{adaptCVOD} by replacing the \textsf{UpdateCentroidsFixed} routine with \textsf{UpdateCentroidsAdapt}.

 \floatname{algorithm}{}
\begin{algorithm}[H]
\label{updatecentroidsfixed}
\renewcommand{\thealgorithm}{}
\caption{\centering \textbf{Subroutine: }\textsf{UpdateCentroidsFixed}}

\KwData{A column partition, $\{Y_i\}_{i=1}^k$ of a matrix $A \in \mathbb{R}^{m \times n}$ with rank($A$) = $\rho$ and a multi-index $d = (d_1 \dots d_k)$.}
\KwResult{$\{U_i\}_{i=1}^k$, $k$, where the $U_i$ form an updated set of $k$ generalized centroids and $k$ is the number of Voronoi sets.}



\For{$i = 1,\ldots,k$}{
    $\widetilde{U}\Sigma W^T \leftarrow \mathsf{SVD}(Y_i)$\\
    $U_i \leftarrow \widetilde{U}(:,1:d_i)$
}

return $(\{U_i\}_{i=1}^k,k)$
\end{algorithm}
%

\subsection{Partitioned CSSP with Adaptive Column Selection}

The \textsf{CVOD} and \textsf{adaptCVOD} algorithms each return a partitioning, $\{V_i\}_{i=1}^k$ of the columns of the data matrix $A$ and $\{U_i\}_{i=1}^k$, which represent low-dimensional subspaces for each $V_i$. 
In \cite{emelianenko2024adaptive}, the \textsf{PartionedDEIM} algorithm applies \textsf{DEIM} to each $V_i$ and returns a combined result.  The \textsf{PartionedCSSP} algorithm presented here extends this last algorithm by allowing one to use any CSSP algorithm (including \textsf{DEIM}) that returns linearly independent columns.  These new, combined algorithms will be referred to as \textsf{CVOD+CSSP} and \textsf{adaptCVOD+CSSP}.

We represent the selected CSSP algorithm as a mapping
$$\mathcal{M}_{\textsf{CSSP}}: \mathbb{R}^{m \times n} \times \{1,\ldots,n\} \rightarrow \{1,\ldots ,n\}.$$
For example, if $A \in \mathbb{R}^{m \times n}$ and $0<r<\rho = \mbox{rank}(A)$, then $\mathcal{M}_{\textsf{CSSP}}(A,r) = \mathcal{J} \subset \{1,\ldots,n\}$ where $\mathcal{J}$ has cardinality $r$ and contains the selected column indices of $A$.  \textsf{PartionedCSSP} processes the Voronoi sets, $V_i$, in a sequential fashion in order to ensure that the returned matrix $C \in \mathbb{R}^{m \times r}$ has full column rank.  First, we sort the $V_i$ in ascending order by the ranks of their centroids; i.e, 
$$\{V_1,\ldots,V_k\} \iff \mbox{rank}(U_i) \le \mbox{rank}(U_{i+1}).$$

\begin{algorithm}[H]
\label{alg:updatecentroidsadapt}
\renewcommand{\thealgorithm}{}
\caption{\centering \textbf{Subroutine: }$\mathsf{UpdateCentroidsAdapt}$}
\KwData{A column partition, $\{Y_i\}_{i=1}^k$ of a matrix $A \in \mathbb{R}^{m \times n}$, with rank($A$) = $\rho$, and a positive integer $r \le \rho$.}
\KwResult{$(\{U_i\}_{i=1}^{\tilde{k}},\tilde{k})$, where the $U_i$ form an updated set of $\tilde{k}$ generalized centroids}


\vspace{0.5\baselineskip}

\For{$i = 1,\ldots,k$}{
    $U_i^{(0)}\Sigma_{i}^{(0)}W_i^{(0)} \leftarrow \mathsf{SVD}(Y_i)$\\
    $S_i \leftarrow \mbox{ singular values of }\Sigma_i$\\
    $U_i^{(1)} \leftarrow \emptyset$
}
$S \leftarrow \mbox{Top r singular values of } diag(\Sigma_i)$

$\widetilde{k} \leftarrow 0\\$
\For{$\sigma \in S$}{
    \For{$i = 1,\ldots,k$}{
        \If{$\sigma \in S_i$}{
            $U_i^{(1)} \leftarrow $Append corresponding column from $U_i^{(0)}$\\
            $\widetilde{k} \leftarrow \widetilde{k} + 1$
        }
    }
}

return $ \left ( \{U_i^{(1)}\}_{i=1}^{\widetilde{k}},\widetilde{k} \right)$
\end{algorithm}

\noindent Next, we run $\mathcal{M}_{\textsf{CSSP}}$ on $V_1$ to select $d_1 = \mbox{rank}(U_1)$ columns from $v_1$:
$$\mathcal{J}_1 = \mathcal{M}_{\textsf{CSSP}}(V_1,d_1),\quad C = V(:,J_1).$$
To select columns from $V_2$, we first project onto the nullspace of $C$
$$\mathcal{J}_2 = \mathcal{M}_{\textsf{CSSP}}(I_m - CC^\dagger)V_2,d_2),\quad C_2 = V_2(:,\mathcal{J}_2),$$ 
and select $d_2 = \mbox{rank}(U_2)$ columns from $V_2$.  The resulting columns are appended to the matrix $C$
and the process repeats until $C$ has $r$ columns.  As shown later, the final matrix $C \in \mathbb{R}^{m \times r}$ will have full column rank.

\floatname{algorithm}{}
\renewcommand{\algorithmcfname}{}
\begin{algorithm}[H]
\renewcommand{\thealgorithm}{}
\caption{\centering \textbf{Algorithm:} $\mathsf{PartitionedCSSP}$}
\label{alg:pCSSP}
\KwData{A column partition, $\{V_i\}_{i=1}^k$ of a matrix $A \in \mathbb{R}^{m \times n}$, with rank($A$) = $\rho$, a positive integer $r<\rho$, a collection, $\{U_i\}_{i=1}^k,$ of $m \times d_i$ matrices containing the top $d_i$ left singular vectors of each $V_i$ with $\sum_{i=1}^k d_i = r$, and a \textsf{CSSP} algorithm, $\mathcal{M}_{\textsf{CSSP}}$.}
\KwResult{$C \in \mathbb{R}^{m \times \tilde{r}}$, $\tilde{r} \le r$, such that $\|A - CC^\dagger A\|_F$ is small.}


\vspace{0.5\baselineskip}

$\{V_i\}_{i=1}^k \leftarrow $ Sort $V_i$ by $\mbox{rank}(U_i) \le \mbox{rank}(U_{i+1})$\\
$C_1 \leftarrow \mathcal{M}_{\textsf{CSSP}}(V_1,d_1)$\\
$C \leftarrow C_1$

\For{$i = 2,\ldots,k$}{
\hspace{0.4\baselineskip}$Q_{i-1}R_{i-1} \leftarrow \mbox{qr}(C)$ \tcp*{QR-decomposition}
    \hspace{0.4\baselineskip}$\tilde{V}_i\leftarrow (I_m - Q_{i-1}Q_{i-1}^T)V_i$\\
    $\mathcal{J}_i \leftarrow \mathcal{M}_{\textsf{CSSP}}(\tilde{V}_i,d_i)$\\
    $C_i \leftarrow V_i(:,\mathcal{J}_i)$\\
    $C \leftarrow [C_1 \dots C_i]$
}
return $C$
\end{algorithm}

\section{Analysis}
Our goal in this section is to construct an explicit relationship between the partitioned-based CSSP solution and the corresponding partition.  To clarify the problem, we first present the lemma and theorem from \cite{emelianenko2024adaptive} that characterize the column ID and CUR reconstruction errors resulting from the \textsf{CVOD+DEIM}/\textsf{adaptCVOD+DEIM} algorithms.  The proofs can be found in \cite{emelianenko2024adaptive}.

\begin{lemma}\label{lemma2}
Let $A \in \mathbb{R}^{m \times n}$ with $\mbox{rank}(A) = \rho$, and let $0 < r < \rho$ be a desired target rank.  Let $C \in \mathbb{R}^{m \times r}$ be the matrix resulting from any of the partition-based \textsf{DEIM} algorithms with an initial column partition of size $k$ and multi-index $d = (d_1 \dots d_k)$, with $d_i = \lfloor r/k \rfloor$.  
If $\{V_i\}_{i=1}^{\tilde{k}}$ is the final column partition with $\tilde{k}\le k$, then
$$\|(I_m - CC^\dagger)A\|_F\le \sqrt{\tilde{k}\gamma_C}\|A - A_r\|_F,$$
    where $\gamma_C = \max_i\|(I_m - C_iC_i^\dagger)V_i\|_F^2\sigma_\rho^{-2}$ and $\sigma_1 \ge \sigma_2 \ge \ldots \ge \sigma_{\rho}>0$ are the singular values of $A$, $C_i \in \mathbb{R}^{m \times \tilde{d}_i}$ contains the columns of $C$ selected from $V_i$, and $A_r \in \mathbb{R}^{m \times n}$ denotes the best rank $r$ approximation to $A$ given by the truncated SVD.
\end{lemma}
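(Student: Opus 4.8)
The plan is to exploit the fact that the Voronoi sets $\{V_i\}_{i=1}^{\tilde k}$ partition the columns of $A$, so that the global residual decomposes into a sum of per-block residuals, and then to bound each per-block residual using only the columns that the selection step extracted from that block. Write $P = CC^\dagger$ for the orthogonal projector onto $\operatorname{range}(C)$, and (after the sorting step in \textsf{PartitionedCSSP}) group the columns of $C$ as $C = [\,C_1 \mid C_2 \mid \cdots \mid C_{\tilde k}\,]$, where $C_i$ collects the columns taken from $V_i$. Since the squared Frobenius norm of a matrix is the sum of the squared norms of its columns, and every column of $A$ belongs to exactly one $V_i$, permutation-invariance of $\|\cdot\|_F$ gives
\[
\|(I_m - P)A\|_F^2 \;=\; \sum_{i=1}^{\tilde k} \|(I_m - P)V_i\|_F^2 .
\]

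Next I would use monotonicity of the projection residual under subspace inclusion. By construction the columns of $C_i$ are a subset of the columns of $C$, so $\operatorname{range}(C_i)\subseteq\operatorname{range}(C)$, and hence $Px$ is at least as good an approximation to $x$ as $C_iC_i^\dagger x$ for every $x$; applying this columnwise to $V_i$ yields $\|(I_m - P)V_i\|_F \le \|(I_m - C_iC_i^\dagger)V_i\|_F$ for each $i$. Combining with the previous display and bounding the sum of $\tilde k$ nonnegative terms by $\tilde k$ times the largest gives
\[
\|(I_m - P)A\|_F^2 \;\le\; \sum_{i=1}^{\tilde k}\|(I_m - C_iC_i^\dagger)V_i\|_F^2 \;\le\; \tilde k \,\max_i \|(I_m - C_iC_i^\dagger)V_i\|_F^2 \;=\; \tilde k\,\gamma_C\,\sigma_\rho^2,
\]
where the last equality is simply the definition of $\gamma_C$.

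It remains to trade $\sigma_\rho^2$ for $\|A - A_r\|_F^2$. Because $r < \rho$, the truncated-SVD error satisfies $\|A - A_r\|_F^2 = \sum_{j=r+1}^{\rho}\sigma_j^2 \ge \sigma_\rho^2$, since $\sigma_\rho^2$ is one of the nonnegative terms in that sum. Substituting and taking square roots yields $\|(I_m - CC^\dagger)A\|_F \le \sqrt{\tilde k\,\gamma_C}\,\|A - A_r\|_F$, as claimed. I do not expect a genuine obstacle: the argument is elementary once the column-partition decomposition is written down, and it never uses any \textsf{DEIM}-specific property beyond the fact that $C$ is assembled from columns of the $V_i$. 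The only points needing care are the bookkeeping that keeps the constant in terms of the final partition size $\tilde k$ (not the initial $k$), and the recognition that the two crude steps — passing from the sum to $\tilde k$ times the max, and the bound $\sigma_\rho^2 \le \|A - A_r\|_F^2$ — are exactly where the slack in the estimate is introduced, which is what motivates the sharper analysis to follow.
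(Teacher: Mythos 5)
Your proof is correct, and since the paper defers this proof to the cited reference, the only evidence of the intended argument is the form of the constant $\sqrt{\tilde k\,\gamma_C}$ itself --- which, with its built-in factor $\sigma_\rho^{-2}$, points to exactly the chain you use: column-partition decomposition of the residual, monotonicity of orthogonal projection under $\operatorname{range}(C_i)\subseteq\operatorname{range}(C)$, sum-to-max, and $\sigma_\rho^2\le\|A-A_r\|_F^2$. No gaps; this is the natural (and essentially forced) route to that bound.
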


\begin{theorem}
 Let $A \in \mathbb{R}^{m \times n}$ with $\mbox{rank}(A) = \rho$, and let $0 < r < \rho$ be a desired target rank.  Suppose $C \in \mathbb{R}^{m \times r}$ and $R \in \mathbb{R}^{r \times n}$ are the result from applying any of the partition-based \textsf{DEIM} algorithms on $A$ and $A^T$ respectively, each with an initial partition of size $k$ and multi-index defined as in Lemma \ref{lemma2}.  If $\{V_i\}_{i=1}^{k_1}$ and $\{W_j\}_{j=1}^{k_2}$ denote the respective final column and row partitions with $k_1,k_2 \le k$, then
 $$\|A - CUR\|_F \le\left ( \sqrt{k_1\gamma_C} + \sqrt{k_2\gamma_R} \right ) \|A - A_r\|_F,$$
 where
 $$\gamma_C = \max_i \|(I_m - C_iC_i^\dagger)V_i\|_F^2\sigma_\rho^{-2},\quad \gamma_R = \max_i \|W_j(I_n - R_j^\dagger R_j)\|_F^2\sigma_\rho^{-2}$$
 are from Lemma \ref{lemma2} and $A_r \in \mathbb{R}^{m \times n}$ denotes the best rank $r$ approximation to $A$ given by the truncated SVD.
\end{theorem}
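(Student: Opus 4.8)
The plan is to reduce the CUR error to two separate ID-type errors and then invoke Lemma~\ref{lemma2} twice. The standard starting point is the well-known decomposition of the CUR approximation error when $U$ is chosen optimally, namely $U = C^\dagger A R^\dagger$. With this choice one has the identity $A - CUR = A - CC^\dagger A R^\dagger R$, and a short manipulation (inserting and removing $CC^\dagger A$) gives the triangle-inequality bound
$$\|A - CUR\|_F \le \|(I_m - CC^\dagger)A\|_F + \|CC^\dagger A(I_n - R^\dagger R)\|_F \le \|(I_m - CC^\dagger)A\|_F + \|A(I_n - R^\dagger R)\|_F,$$
where the last step uses that $CC^\dagger$ is an orthogonal projector and hence has spectral norm at most $1$. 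The first term is exactly the column ID error controlled by Lemma~\ref{lemma2} applied to $A$ with its final column partition $\{V_i\}_{i=1}^{k_1}$, yielding $\sqrt{k_1\gamma_C}\,\|A - A_r\|_F$.

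Next I would handle the second term by symmetry. Applying the partition-based \textsf{DEIM} algorithm to $A^T$ produces $R^T$, a matrix of columns of $A^T$, i.e.\ $R$ is a matrix of rows of $A$; the associated ID error is $\|(I_n - R^T (R^T)^\dagger)A^T\|_F = \|A(I_n - R^\dagger R)\|_F$, since $(R^T)^\dagger = (R^\dagger)^T$ and the Frobenius norm is transpose-invariant. Lemma~\ref{lemma2} applied to $A^T$ with final partition $\{W_j\}_{j=1}^{k_2}$ then gives $\|A(I_n - R^\dagger R)\|_F \le \sqrt{k_2\gamma_R}\,\|A^T - (A^T)_r\|_F = \sqrt{k_2\gamma_R}\,\|A - A_r\|_F$, where I use that $A$ and $A^T$ share the same singular values so $\sigma_\rho$ and the best rank-$r$ error are identical, and that $\gamma_R$ as defined is precisely the quantity $\gamma_C$ from Lemma~\ref{lemma2} instantiated on $A^T$ (with $W_j(I_n - R_j^\dagger R_j)$ playing the role of $(I_m - C_iC_i^\dagger)V_i$ after transposition). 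Adding the two bounds produces the claimed inequality.

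The routine parts are the algebraic identity for $A - CUR$ and the bookkeeping that translates the row-side quantities into transposed column-side quantities; neither is deep. The one point that genuinely needs care — and which I expect to be the main obstacle to a fully rigorous write-up — is justifying that the optimal core matrix is $U = C^\dagger A R^\dagger$ and that the resulting error splits as above; one must be careful that $C$ has full column rank and $R$ has full row rank (guaranteed by the sequential construction in \textsf{PartitionedCSSP}, as asserted after the algorithm) so that the pseudoinverses behave as orthogonal projectors $CC^\dagger$ and $R^\dagger R$, and that the cross term vanishes or is absorbed correctly. Once that structural fact is in hand, the rest is a direct double application of Lemma~\ref{lemma2}.
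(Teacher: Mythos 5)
Your proposal is correct and follows the standard argument (the one used in the cited source for this theorem, which the paper defers to): choose $U = C^\dagger A R^\dagger$, split $A - CC^\dagger A R^\dagger R = (I_m - CC^\dagger)A + CC^\dagger A(I_n - R^\dagger R)$, drop the projector $CC^\dagger$ in the second term, and apply Lemma~\ref{lemma2} to $A$ and to $A^T$, using that $A$ and $A^T$ share singular values and best rank-$r$ error. One small simplification: $CC^\dagger$ and $R^\dagger R$ are orthogonal projectors for \emph{any} $C$ and $R$, so the full-rank caveat you flag is not actually needed for that step (though full column rank of $C$ is what makes the selected columns a genuine rank-$r$ interpolative basis).
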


The main issue here is that the results are, with the exception of the $\gamma_C$ and $\gamma_R$ terms, partition-agnostic.  In other words, the results are valid given any partitioning of the columns of $A$.  What we require is a result inherently tied to the choice  of partitioning algorithm.  This will be the focus of our work below.  We begin by presenting several results that will help with our proofs later on.  The next goal will be to place the column ID reconstruction error in terms of the energy functional from the corresponding partitioning strategy.  Following this, we will bound the energy functional value at termination by objects related to the data matrix under discussion.  This last will allow us to combine the results and form a more-informative bound on the column ID reconstruction error.  

In what follows, the matrix under discussion will be $A \in \mathbb{R}^{m \times n}$ with $\mbox{rank}(A) = \rho$ and target rank $0<r<\rho$.  The number of Voronoi sets will be denoted by $k$.

\subsection{Preliminaries}

In this section we cover helpful lemmas etc. that will be used for the detailed analysis that follows.  We begin with a modification of a subspace distance theorem from \cite{golub2013matrix}.  This result will allow us to relate the local reconstruction errors of each point due to the \textsf{CVOD}/\textsf{adaptCVOD} routines to the best $r$-dimensional reconstruction error of the data matrix $A$.

\begin{theorem}
\label{golub}
    Suppose $$W = [\underbrace{W_1}_{k}\: | \: \underbrace{W_2}_{n-k}], \; Z = [\underbrace{Z_1}_{k}\: | \: \underbrace{Z_2}_{n-k} ],$$
    are $n\times n$ orthogonal matrices. Then
    $$\|W_1W_1^T - Z_1Z_1^T\|_F = \|W_1^TZ_2\|_F = \|Z_1^TW_2\|_F.$$
\end{theorem}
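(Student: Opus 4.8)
The plan is to reduce the statement to two elementary facts: a ``column‑energy conservation'' identity coming from the orthogonality of $W$ and $Z$, and the trace identity for a difference of orthogonal projectors. Set $P := W_1W_1^T$ and $Q := Z_1Z_1^T$; both are symmetric idempotents of rank $k$, so $\mathrm{tr}(P) = \mathrm{tr}(Q) = k$. The first thing I would record is that, since $ZZ^T = I_n$ and $W_1$ has orthonormal columns, $\|W_1^TZ\|_F^2 = \mathrm{tr}(W_1^TZZ^TW_1) = \mathrm{tr}(W_1^TW_1) = k$; splitting $Z = [\,Z_1\mid Z_2\,]$ columnwise then gives $\|W_1^TZ_1\|_F^2 + \|W_1^TZ_2\|_F^2 = k$. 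Interchanging the roles of $W$ and $Z$ gives $\|Z_1^TW_1\|_F^2 + \|Z_1^TW_2\|_F^2 = k$, and since $\|W_1^TZ_1\|_F = \|Z_1^TW_1\|_F$, subtracting the two identities yields $\|W_1^TZ_2\|_F = \|Z_1^TW_2\|_F$. So the right‑hand equality in the theorem is almost immediate.

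For the projector difference I would expand, using that $P-Q$ is symmetric, $\|P-Q\|_F^2 = \mathrm{tr}\big((P-Q)^2\big) = \mathrm{tr}(P) + \mathrm{tr}(Q) - 2\,\mathrm{tr}(PQ) = 2k - 2\,\mathrm{tr}(PQ)$, and then evaluate $\mathrm{tr}(PQ) = \mathrm{tr}(W_1W_1^TZ_1Z_1^T) = \mathrm{tr}\big((W_1^TZ_1)(W_1^TZ_1)^T\big) = \|W_1^TZ_1\|_F^2$ by the cyclic property of the trace. Combining this with the conservation identity $\|W_1^TZ_2\|_F^2 = k - \|W_1^TZ_1\|_F^2$ from the first step chains the three quantities together. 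A trace‑free alternative is to write $W_1W_1^T - Z_1Z_1^T = W_1W_1^TZ_2Z_2^T - W_2W_2^TZ_1Z_1^T$ using $I_n = Z_1Z_1^T + Z_2Z_2^T$ and $I_n = W_1W_1^T + W_2W_2^T$, observe that the two summands have mutually orthogonal column spaces and mutually orthogonal row spaces so that their Frobenius norms add in quadrature, and simplify each summand using that the Frobenius norm is unchanged under left multiplication by a matrix with orthonormal columns and under right multiplication by a matrix with orthonormal rows.

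The computation is short and there is no real technical obstacle; the only points that need care are the orthogonality bookkeeping ($W_1^TW_1 = I_k$, $WW^T = ZZ^T = I_n$, and $I_n = W_1W_1^T + W_2W_2^T$) and the normalization constant relating $\|W_1W_1^T - Z_1Z_1^T\|$ to the cross‑block norms. Carried out in the Frobenius norm, both routes above in fact produce $\|W_1W_1^T - Z_1Z_1^T\|_F^2 = 2\|W_1^TZ_2\|_F^2$, i.e. an extra factor $\sqrt{2}$ relative to the displayed identity, whereas the classical subspace‑distance theorem of \cite{golub2013matrix} is the constant‑free statement in the spectral norm. So in writing the proof I would first pin down which version is intended — the $\sqrt2$‑weighted Frobenius identity or the spectral‑norm one — since the two cross‑block norms and the conservation law are identical in either case and only the final constant differs.
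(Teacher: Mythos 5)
Your proof is correct, and your closing caveat about the constant is not a quibble --- it identifies a genuine error in the theorem as stated. The paper's own argument conjugates the projector difference by the orthogonal matrices, obtaining
$$W^T\bigl(W_1W_1^T - Z_1Z_1^T\bigr)Z = \left[\begin{array}{cc} 0 & W_1^TZ_2\\ -W_2^TZ_1 & 0\end{array}\right],$$
and then proves $\|W_1^TZ_2\|_F=\|W_2^TZ_1\|_F$ exactly as you do (via the column/row energy of the orthogonal matrix $Q=W^TZ$). But it stops there, silently treating the Frobenius norm of the block anti-diagonal matrix as if it equaled the norm of one block; in the Frobenius norm the two blocks add in quadrature, so the paper's own computation actually yields $\|W_1W_1^T-Z_1Z_1^T\|_F^2 = \|W_1^TZ_2\|_F^2+\|W_2^TZ_1\|_F^2 = 2\|W_1^TZ_2\|_F^2$, in agreement with your trace computation $\|P-Q\|_F^2 = 2k-2\|W_1^TZ_1\|_F^2$. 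The constant-free chain of equalities is the spectral-norm statement from Golub--Van Loan (where the block anti-diagonal structure gives $\max$ rather than a sum of squares); for the Frobenius norm the first equality should read $\|W_1W_1^T-Z_1Z_1^T\|_F=\sqrt{2}\,\|W_1^TZ_2\|_F$. Methodologically your route differs from the paper's only in the first step --- you use $\mathrm{tr}\bigl((P-Q)^2\bigr)=2k-2\,\mathrm{tr}(PQ)$ where the paper uses unitary invariance to reduce to the block form --- and both then rely on the same conservation identity $\|W_1^TZ_1\|_F^2+\|W_1^TZ_2\|_F^2=k$; your version is marginally more self-contained, while the paper's block form is what generalizes to the spectral norm. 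The missing $\sqrt{2}$ propagates into Lemma \ref{bound} (whose first displayed equality should likewise carry a $\sqrt{2}$) and hence into the constant $\zeta$ of Theorem \ref{csspError}, but since all downstream uses are upper bounds, no structural conclusion is affected.
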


\begin{proof}
    Following the approach from \cite{golub2013matrix}, observe that
    \begin{eqnarray*}
        \|W_1W_1^T - Z_1Z_1^T\|_F^2&=&\|W^T(W_1W_1^T - Z_1Z_1^T)Z\|_F^2\\
        &=& \left \| \left [ \begin{array}{cc}
                         0&W_1^TZ_2\\
                         -W_2^TZ_1&0\\
                         \end{array} \right ] \right \|_F^2\\
    \end{eqnarray*}
    Now note that the matrices $W_2^TZ_1$ and $W_1^TZ_2$ are submatrices of the $n \times n$ orthogonal matrix
    $$Q = \left [ \begin{array}{cc}
                   Q_{11}&Q_{12}\\
                   Q_{21}&Q_{22}\\
                   \end{array} \right] = \left [ \begin{array}{cc}
                   W_1^TZ_1&W_1^TZ_2\\
                   W_2^TZ_1&W_2^TZ_2\\
                   \end{array} \right] = W^TZ.$$
    We need to show that $||Q_{21}||_F = ||Q_{12}||_F.$ Since $Q$ has orthogonal columns, we have
    $$\left \| \left [ \begin{array}{c}
                      Q_{11}\\
                      Q_{21}\\
                      \end{array} \right ] \right \|_F^2 = k = \|Q_{11}\|_F^2 + \|Q_{21}\|_F^2 \Rightarrow \|Q_{21}\|_F^2 = k - \|Q_{11}\|_F^2.$$
    Similarly, using $Q^T,$ which is also orthogonal, we have
    \begin{eqnarray*}
        \left \| \left [ \begin{array}{c}
                      Q^T_{11}\\
                      Q^T_{12}\\
                      \end{array} \right ] \right \|_F^2 = k &=& \|Q^T_{11}\|_F^2 + \|Q^T_{12}\|_F^2 \\
                      &=&\|Q_{11}\|_F^2 + \|Q_{12}\|_F^2\\
                      &\Rightarrow&\|Q_{12}\|_F^2 = k - \|Q_{11}\|_F^2\\
    \end{eqnarray*}
    
    Thus $\|W_2^TZ_1\|_F = \|W_1^TZ_2\|_F$ and the proof is complete.
\end{proof}

Our next result bounds the discrepancy between the dominant $r$-dimensional column space of a matrix and that of a linearly independent subset of columns (from the same matrix) of size $r$.

\begin{lemma}\label{bound}
    Let $A \in \mathbb{R}^{m \times n}$ have rank $\rho$, and let $A_r = U_r \Sigma_r W_r^T$, $r < \rho$, be its truncated SVD.  If $C \in \mathbb{R}^{m \times r}$ is built using columns from $A$ and has full column rank, then
    $$\|U_rU_r^T - CC^\dagger \|_F \le \|A - A_r\|_F\|C^\dagger\|_2.$$
\end{lemma}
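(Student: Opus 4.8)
The plan is to bound $\|U_rU_r^T - CC^\dagger\|_F$ by relating both orthogonal projectors to the projector onto the column space of $A$ itself. Write $P_A = U_\rho U_\rho^T$ for the projector onto $\mathrm{range}(A)$, where $U_\rho$ collects all $\rho$ left singular vectors. Since $C$ is built from columns of $A$, we have $\mathrm{range}(C) \subseteq \mathrm{range}(A)$, so $P_A CC^\dagger = CC^\dagger CC^\dagger = CC^\dagger$; similarly $P_A U_rU_r^T = U_rU_r^T$. The idea is to insert $A$ in a form that lets $\|A - A_r\|_F$ appear. Since $C = AS$ for some selection (zero/one) matrix $S$, and $CC^\dagger C = C$, the key observation is that $CC^\dagger A_r$ and $U_rU_r^T$ differ in a controllable way: $U_rU_r^T A = A_r$ (because $U_rU_r^T$ annihilates the trailing singular directions), hence $U_rU_r^T = U_rU_r^T A A^\dagger$ — wait, that is only true when restricted appropriately. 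Let me instead proceed directly.

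First I would use the identity, valid because $CC^\dagger$ is the orthogonal projector onto $\mathrm{range}(C)$ and $C = A_r S + (A - A_r)S$ where $S$ is the column-selection matrix: the matrix $C$ itself satisfies $C = CC^\dagger C$, so $CC^\dagger A_r S = CC^\dagger C - CC^\dagger(A-A_r)S = C - CC^\dagger(A-A_r)S$. Then I would write
$$U_rU_r^T - CC^\dagger = U_rU_r^T(I - CC^\dagger) - (I - U_rU_r^T)CC^\dagger,$$
and handle each term. For the second term, $(I - U_rU_r^T)CC^\dagger = (I - U_rU_r^T) C C^\dagger$, and $(I-U_rU_r^T)C = (I - U_rU_r^T)AS = (A - A_r)S$ (using $U_rU_r^T A = A_r$, since the projector onto the top $r$ left singular vectors kills exactly the tail of the SVD). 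Therefore $\|(I - U_rU_r^T)CC^\dagger\|_F = \|(A-A_r)SC^\dagger\|_F \le \|A - A_r\|_F\|S\|_2\|C^\dagger\|_2 = \|A-A_r\|_F\|C^\dagger\|_2$ since the selection matrix $S$ has $\|S\|_2 = 1$. For the first term, $U_rU_r^T(I - CC^\dagger)$, I would take transposes: its Frobenius norm equals $\|(I - CC^\dagger)U_rU_r^T\|_F$, and by Theorem \ref{golub} (applied to the orthogonal completions of $U_r$ and an orthonormal basis of $\mathrm{range}(C)$), $\|(I-CC^\dagger)U_rU_r^T\|_F = \|U_rU_r^T(I-CC^\dagger)\|_F$, so the two error terms have equal norm. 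This symmetry, which is exactly what Theorem \ref{golub} buys us, is the crux.

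Putting the pieces together, $\|U_rU_r^T - CC^\dagger\|_F \le \|U_rU_r^T(I-CC^\dagger)\|_F + \|(I-U_rU_r^T)CC^\dagger\|_F = 2\|(A-A_r)SC^\dagger\|_F$, which gives a factor of $2$, not the clean bound stated. So I expect the actual argument avoids splitting into two terms and instead works with a single clever identity. The better route: since $CC^\dagger$ and $U_rU_r^T$ are both rank-$r$ orthogonal projectors, Theorem \ref{golub} gives $\|U_rU_r^T - CC^\dagger\|_F = \|U_r^T N\|_F = \|N^T U_r^{\perp}\text{-ish}\|$ where $N$ is an orthonormal basis for $\mathrm{range}(C)^\perp$; equivalently $\|U_rU_r^T - CC^\dagger\|_F = \|(I - CC^\dagger)U_r\|_F$. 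Now $(I-CC^\dagger)U_r$: I want to show $\|(I-CC^\dagger)U_r\|_F \le \|A-A_r\|_F\|C^\dagger\|_2$ directly. Write $U_r = A_r W_r \Sigma_r^{-1}$ from the SVD (so $U_r = A W_r \Sigma_r^{-1} - (A - A_r)W_r\Sigma_r^{-1}$, but $A_r W_r = U_r \Sigma_r$ so $A_r W_r \Sigma_r^{-1} = U_r$, consistent). Hmm — the cleanest is: $U_r \Sigma_r = A_r W_r = A W_r - (A-A_r)W_r = A W_r$ since $(A-A_r)W_r = 0$ (the tail singular vectors $W_r$ are orthogonal to the leading right singular space). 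So $U_r = A W_r \Sigma_r^{-1}$. Then $(I - CC^\dagger)U_r = (I-CC^\dagger) A W_r \Sigma_r^{-1}$, but $(I-CC^\dagger)A = (I-CC^\dagger)(A_r + (A-A_r))$ and $(I-CC^\dagger)A_r$ need not vanish. The step I expect to be the main obstacle is precisely getting the single factor (not $2$): I believe it comes from using $(I - CC^\dagger)C = 0$ to replace $A$ by $A - A_r$ inside $(I-CC^\dagger)U_r$ at the cost of a term $(I-CC^\dagger)(CC^\dagger A_r - A_r)$-style correction that one argues is dominated — or, more likely, from noting $\|(I-CC^\dagger)U_r\|_F = \|(I-CC^\dagger)U_rU_r^T\|_F$ and $U_rU_r^T = A_r A_r^\dagger$-type projector allowing $(I-CC^\dagger)U_rU_r^T = (I-CC^\dagger)(A-A_r)(\text{something with norm} \le \|C^\dagger\|_2)$. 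I would reconstruct the exact bookkeeping there; everything else (invoking Theorem \ref{golub}, submultiplicativity, $\|S\|_2 = 1$) is routine.
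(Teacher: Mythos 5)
Your proposal assembles all the raw ingredients of the paper's argument but never completes it, and you say so yourself (``I would reconstruct the exact bookkeeping there''). The paper's proof is short: write $C = QR$, so that $CC^\dagger = QQ^T$ and $C^\dagger = R^{-1}Q^T$; apply Theorem~\ref{golub} with $W_1 = U_r$ and $Z_1 = Q$ to get $\|U_rU_r^T - QQ^T\|_F = \|\bar{U}_r^TQ\|_F$; then substitute $Q = CR^{-1}$ and use $\bar{U}_r^TC = \bar{U}_r^TAS_c = \bar{\Sigma}\bar{W}^TS_c$, whose Frobenius norm is at most $\|A - A_r\|_F$, together with $\|R^{-1}\|_2 = \|C^\dagger\|_2$. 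The step you are missing is simply to use the \emph{other} member of the equality chain in Theorem~\ref{golub}. In your first attempt you correctly established
$$\|(I_m - U_rU_r^T)CC^\dagger\|_F = \|(A - A_r)S\,C^\dagger\|_F \le \|A - A_r\|_F\,\|C^\dagger\|_2,$$
and that left-hand side equals $\|\bar{U}_r^TQ\|_F$ (since $\|(I_m-U_rU_r^T)QQ^T\|_F = \|(I_m-U_rU_r^T)Q\|_F = \|\bar{U}_r^TQ\|_F$), which Theorem~\ref{golub} identifies with $\|U_rU_r^T - CC^\dagger\|_F$ itself --- no triangle inequality needed. Instead you pivoted to the transposed quantity $\|(I_m - CC^\dagger)U_r\|_F$, for which there is no direct handle (as you discovered, $(I_m-CC^\dagger)A_r$ does not vanish), and the argument stalls. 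As written, the only bound you actually complete is the two-term split with constant $2$, which is weaker than the claim.

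A remark on the constant you were chasing: Theorem~\ref{golub} as stated asserts $\|W_1W_1^T - Z_1Z_1^T\|_F = \|W_1^TZ_2\|_F$, but its own proof exhibits the difference as a block matrix whose squared Frobenius norm is $\|W_1^TZ_2\|_F^2 + \|W_2^TZ_1\|_F^2 = 2\|W_1^TZ_2\|_F^2$; what is actually proved is the equality of the two off-diagonal blocks, and the Frobenius version of the subspace-distance identity carries a factor $\sqrt{2}$ (the constant-free version is the $2$-norm statement in Golub--Van Loan). So the honest constant in Lemma~\ref{bound} is $\sqrt{2}$ rather than $1$ --- still strictly better than the $2$ your triangle-inequality split produces, so your instinct that the splitting is lossy was correct, but the constant-$1$ target you were trying to reverse-engineer is itself an artifact of this slip in the paper.
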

\begin{proof}
    Since $C$ has full column rank, we may write $C = QR$, where $Q \in \mathbb{R}^{m \times r}$ has orthonormal columns and $R \in \mathbb{R}^{r \times r}$ is upper triangular and nonsingular.  Then, $CC^\dagger = QQ^T$.  Let $\bar{U}_r\in \mathbb{R}^{m \times n-r}$ have as columns the left singular vectors of $A$ not contained in $U_r$.   These objects and theorem \ref{golub} imply
    \begin{eqnarray*}
        \|U_rU_r^T -CC^\dagger\|_F &=&\|U_rU_r^T - QQ^T\|_F\\
        &=& \|\bar{U}_r^TQ\|_F\\
        &=&\|\bar{U}_rCR^{-1}\|_F\\
        &\le&\|\bar{U}_rC\|_F \|R^{-1}\|_2\\
        &=&\|\bar{U}^T U\Sigma W^T S_c\|_F \|R^{-1}\|_2\\
    \end{eqnarray*}
    \noindent where $U\Sigma W^T = A$ is the SVD of $A$ and $S_c \in \mathbb{R}^{n \times r}$ is the column selection matrix for $C$; i.e., $C = AS_c$.  We have
    $$\bar{U}^T U\Sigma W^T = \bar{\Sigma}\bar{W}^T$$
    where $\bar{\Sigma} \in \mathbb{R}^{\rho - r \times \rho-r}$ contains the $\rho - r$ smallest singular values of $A$ and $\bar{W} \in \mathbb{R}^{n \times \rho - r}$ contains the corresponding right singular vectors.  Thus, $\|\bar{U}^T U\Sigma W^T S_c\|_F \le \|A - A_r\|_F$.  The lemma follows by recognizing that $C^\dagger = R^{-1}Q^T$.
\end{proof}

For our last result in this section, we show that the output from the \textsf{PartionedCSSP} algorithm has full column rank.

\begin{lemma}\label{independent}
    Let $A \in \mathbb{R}^{m \times n}$ with $\mbox{rank}(A) = \rho$, and let $\mathcal{M}_{\textsf{CSSP}}$ be any CSSP algorithm that returns linearly independent columns.  Let $C \in \mathbb{R}^{m \times r}$ be the result from applying \textsf{PartionedCSSP} with $\mathcal{M}_{\textsf{CSSP}}$ on the size $k$ column partition $\{V_i\}_{i=1}^k$ of $A$ with target rank $0<r<\rho$, and a multi-index $d = (d_1 \dots d_k)$.  Then $C$ has full column rank.
\end{lemma}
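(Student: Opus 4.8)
The claim is that the matrix $C$ returned by \textsf{PartitionedCSSP} has full column rank. The plan is to argue by induction on the number of processed Voronoi sets, tracking the partial matrix $C = [C_1 \mid \cdots \mid C_i]$ built after $i$ iterations. The base case is immediate: $C_1 = \mathcal{M}_{\textsf{CSSP}}(V_1, d_1)$ consists of $d_1$ columns drawn from $V_1$, and since $\mathcal{M}_{\textsf{CSSP}}$ is assumed to return linearly independent columns, $C_1$ has full column rank $d_1$. For the inductive step, assume $[C_1 \mid \cdots \mid C_{i-1}]$ has full column rank $d_1 + \cdots + d_{i-1}$, so its range is a subspace $\mathcal{C}_{i-1}$ of that dimension, and let $Q_{i-1}$ be the orthonormal basis for $\mathcal{C}_{i-1}$ produced by the QR step, so $Q_{i-1}Q_{i-1}^T$ is the orthogonal projector onto $\mathcal{C}_{i-1}$.

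\textbf{Key steps.} The heart of the argument is the following observation about the $i$-th iteration. We form $\tilde{V}_i = (I_m - Q_{i-1}Q_{i-1}^T) V_i$, run $\mathcal{M}_{\textsf{CSSP}}$ on $\tilde{V}_i$ to get indices $\mathcal{J}_i$, and then set $C_i = V_i(:, \mathcal{J}_i)$ — crucially, the returned columns are the \emph{original} (un-projected) columns of $V_i$. By the linear-independence property of $\mathcal{M}_{\textsf{CSSP}}$, the selected columns of $\tilde{V}_i$, namely $\tilde{V}_i(:, \mathcal{J}_i) = (I_m - Q_{i-1}Q_{i-1}^T) C_i$, are linearly independent; equivalently, $C_i$ has $d_i$ columns whose images under the projection onto $\mathcal{C}_{i-1}^\perp$ are linearly independent. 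I then claim this forces $\mathrm{range}(C_i) \cap \mathcal{C}_{i-1} = \{0\}$ \emph{and} $C_i$ itself has full column rank: indeed, if some nontrivial combination $C_i \mathbf{c}$ lay in $\mathcal{C}_{i-1}$, then $(I_m - Q_{i-1}Q_{i-1}^T) C_i \mathbf{c} = 0$, contradicting linear independence of the projected columns (and taking $C_i\mathbf{c} = 0$ as a special case shows $C_i$ is itself of full rank). Therefore $[C_1 \mid \cdots \mid C_{i-1} \mid C_i]$ has rank equal to $\dim \mathcal{C}_{i-1} + d_i = d_1 + \cdots + d_i$: any vanishing combination of all its columns splits into a piece in $\mathcal{C}_{i-1}$ (from the first $i-1$ blocks) equalling the negative of a piece in $\mathrm{range}(C_i)$, which by the trivial-intersection property forces both pieces to be zero, and then the inductive hypothesis kills the first part while full rank of $C_i$ kills the second. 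Running the induction to $i = k$ gives the result, with the final column count $\sum_{i=1}^k d_i = r$.

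\textbf{Main obstacle.} There is no deep obstacle; the argument is essentially linear-algebraic bookkeeping. The one point requiring care is the distinction between columns of $\tilde{V}_i$ (projected) and columns of $C_i$ (original): the algorithm selects \emph{indices} from the projected matrix but extracts the \emph{original} columns, so I must be explicit that $\tilde{V}_i(:,\mathcal{J}_i) = (I - Q_{i-1}Q_{i-1}^T)\,V_i(:,\mathcal{J}_i) = (I - Q_{i-1}Q_{i-1}^T) C_i$, and that independence of the former is exactly what the CSSP hypothesis delivers. A secondary minor point is the possibility that some $V_i$ has too few columns, or that $d_i$ exceeds the rank of $\tilde V_i$, so that $\mathcal{M}_{\textsf{CSSP}}$ cannot return $d_i$ linearly independent columns; this is precisely why the stated result allows $\tilde r \le r$, and I would note that the inductive argument applies verbatim with $d_i$ replaced by the actual number $\tilde d_i \le d_i$ of columns returned, still preserving full column rank of the accumulated $C$.
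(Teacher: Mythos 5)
Your proof is correct and follows essentially the same route as the paper's: induction over the blocks, using the fact that $\tilde V_i(:,\mathcal J_i) = (I_m - Q_{i-1}Q_{i-1}^T)C_i$ has independent columns to conclude that $C_i$ is full rank and meets the span of the previously accumulated columns trivially. If anything, your write-up makes the final concatenation step (why $[C_1\mid\cdots\mid C_i]$ is full rank, via the trivial-intersection argument) more explicit than the paper does.
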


\begin{proof}
    Let $\{V_i\}_{i=1}^{\tilde{k}}$ denote the column partition that results when the partition algorithm completes, and assume they have been ordered as in \textsf{PartitionCSSP}.  We may write $C = [C_1 \dots C_{\tilde{k}}],$ where $C_i \in \mathbb{R}^{m \times d_i}$ contains those columns of $C$ that belong to $V_i$.  Since $C_1$ results from applying $\mathcal{M}_{\textsf{CSSP}}$ to $V_1$,  we know that it has full column rank.  Proceeding by induction, suppose $C = [C_1 \dots C_s]$, $1<s<\tilde{k}$ has been constructed and has full column rank.  We next consider $B = (I_m - QQ^T)V_{s+1} \in \mathbb{R}^{m \times d_{s+1}}$, where $QR = C$ is the QR-decomposition of $C$.  Let $T_{s+1} = I_{n \times n}(:,\mathcal{J}_{s+1}) \in \mathbb{R}^{n_{s+1} \times d_{s+1}}$, where $\mathcal{J}_{s+1} = \mathcal{M}_{\textsf{CSSP}}(B,d_{s+1})$.  Then $BT_{s+1}$ has full column rank, and each column is linearly independent with respect to the columns of $C$.  Now suppose that $V_{s+1}T_{s+1}$ does not have full column rank.  Then there exists $x \neq 0$ in $\mathbb{R}^{d_{s+1}}$ such that $V_{s+1}T_{s+1}x = 0$.  But this implies
    $$\|Bx\|_2 = \|(I_m - QQ^T)V_{s+1}x\|_2 \le \|V_{s+1}T_{s+1}x\|_2 = 0,$$
    a contradiction.  Thus, the $V_{s+1}T_{s+1}$ has full column rank.  
\end{proof}

\subsection{Error Bounds}

The first goal of this section is to bound the ID error in terms of the energy functional value that \textsf{CVOD} or \textsf{adaptCVOD} achieves when run to completion.  The next goal will be to construct an upper bound on the \textsf{CVOD} (\textsf{adpatCVOD}) energy at termination in terms of objects related to the input data matrix, $A$.  Once complete, these two results will be combined to give an overall bound on the ID reconstruction error.

Our first theorem states that the ID reconstruction error that results from either \textsf{CVOD+CSSP} or \textsf{adaptCVOD+CSSP} is on the order of the \textsf{CVOD}/\textsf{adaptCVOD} energy functional value at termination.

\begin{theorem}\label{csspError}
    Let $A \in \mathbb{R}^{m \times n}$, rank($A$) = $\rho$, and $0<r<\rho$, $0<k<n$ be integers.  If $C \in \mathbb{R}^{m \times r}$ is the output from \textsf{CVOD+CSSP} (\textsf{adaptCVOD+CSSP}), then
    $$\|(I_m - CC^\dagger)A\|_F \sim \mathcal{O}  \left (\mathcal{G}^* \right )$$
    where $\mathcal{G}^*$ is the energy value of \textsf{CVOD} (\textsf{adaptCVOD}) at completion.
\end{theorem}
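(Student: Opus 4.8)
The plan is to bound $\|(I_m - CC^\dagger)A\|_F$ by first invoking Lemma~\ref{lemma2}, which already gives
$$\|(I_m - CC^\dagger)A\|_F \le \sqrt{\tilde k\,\gamma_C}\,\|A - A_r\|_F,\qquad \gamma_C = \max_i \|(I_m - C_iC_i^\dagger)V_i\|_F^2\,\sigma_\rho^{-2},$$
so that it suffices to control $\gamma_C$, i.e.\ the largest local CSSP residual $\|(I_m - C_iC_i^\dagger)V_i\|_F^2$ over the final Voronoi sets. The key observation is that in \textsf{PartitionedCSSP} the block $C_i$ is obtained by running $\mathcal{M}_{\textsf{CSSP}}$ on the \emph{deflated} set $\tilde V_i = (I_m - Q_{i-1}Q_{i-1}^T)V_i$ and then lifting the selected indices back to $V_i$; since $\mathcal{M}_{\textsf{CSSP}}$ returns linearly independent columns, Lemma~\ref{independent} guarantees $C = [C_1\cdots C_{\tilde k}]$ has full column rank, and a short argument (of the same flavour as the contradiction in Lemma~\ref{independent}) shows $\|(I_m - C_iC_i^\dagger)V_i\|_F \le \|(I_m - \tilde C_i \tilde C_i^\dagger)\tilde V_i\|_F + \|(I_m - Q_{i-1}Q_{i-1}^T)V_i - \tilde V_i\|_F$; the second term vanishes, so the local residual on $V_i$ is at most the local CSSP residual on the deflated block. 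I would then compare the CSSP residual on $\tilde V_i$ with the best rank-$d_i$ projection residual of $\tilde V_i$, which in turn is bounded using $U_i$: because $U_i$ holds the top $d_i$ left singular vectors of $V_i$ (for \textsf{CVOD}) or of the relevant sub-block of $\mathrm{diag}(V_i)$ (for \textsf{adaptCVOD}), we have
$$\|(I_m - C_iC_i^\dagger)V_i\|_F^2 \;\le\; c\,\|(I_m - U_iU_i^T)V_i\|_F^2$$
for a constant $c$ depending only on the quality factor of the chosen CSSP routine (e.g.\ via the standard $\|(I-CC^\dagger)A\|_F \le \eta\,\|A - A_{d_i}\|_F$ bound that holds for \textsf{DEIM}, CPQR, leverage-score sampling, etc.); here Lemma~\ref{bound} is the natural tool to pass from $C_iC_i^\dagger$ to $U_iU_i^T$ quantitatively.

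Next I would sum over $i$. By definition of the energy functional,
$$\mathcal{G}^* = \sum_{i=1}^{\tilde k}\sum_{x \in V_i}\|(I_m - U_iU_i^T)x\|_2^2 = \sum_{i=1}^{\tilde k}\|(I_m - U_iU_i^T)V_i\|_F^2,$$
so in particular $\max_i \|(I_m - U_iU_i^T)V_i\|_F^2 \le \mathcal{G}^*$ and $\tilde k\,\gamma_C \le c\,\sigma_\rho^{-2}\sum_i \|(I_m - U_iU_i^T)V_i\|_F^2 = c\,\sigma_\rho^{-2}\,\mathcal{G}^*$. Substituting into the Lemma~\ref{lemma2} bound and using $\|A - A_r\|_F^2 = \sum_{j>r}\sigma_j^2 \le \rho\,\sigma_\rho^2$ (or, more sharply, just carrying $\|A-A_r\|_F$ as a data-dependent constant) gives
$$\|(I_m - CC^\dagger)A\|_F \;\le\; \sqrt{c}\,\sigma_\rho^{-1}\,\|A - A_r\|_F\,\sqrt{\mathcal{G}^*},$$
which is $\mathcal{O}(\sqrt{\mathcal{G}^*})$; since $\mathcal{G}^*$ and $\sqrt{\mathcal{G}^*}$ are of the same order up to the regime where $\mathcal{G}^*$ is bounded, this yields the claimed $\|(I_m - CC^\dagger)A\|_F \sim \mathcal{O}(\mathcal{G}^*)$ (and I would state the cleaner $\sqrt{\mathcal{G}^*}$ form explicitly). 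The \textsf{adaptCVOD} case is handled identically once one notes that after the (possible) reduction of $k$, the surviving centroids $U_i$ still satisfy $\sum_i \|(I_m - U_iU_i^T)V_i\|_F^2 = \mathcal{G}^*$ and still capture the optimal rank-$r$ subspace of $\mathrm{diag}(V_i)$, so the same chain of inequalities applies.

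The main obstacle I anticipate is the first comparison step: justifying $\|(I_m - C_iC_i^\dagger)V_i\|_F^2 \le c\,\|(I_m - U_iU_i^T)V_i\|_F^2$ uniformly over \emph{all} admissible $\mathcal{M}_{\textsf{CSSP}}$, because the deflation $\tilde V_i = (I_m - Q_{i-1}Q_{i-1}^T)V_i$ changes the singular spectrum that the CSSP routine actually sees, so one cannot directly quote the routine's off-the-shelf error bound relative to $V_i$. The remedy is to work with the deflated block throughout: bound $\|(I_m - C_iC_i^\dagger)V_i\|_F$ by $\|(I_m-\tilde C_i\tilde C_i^\dagger)\tilde V_i\|_F$ plus a controllable correction, apply the CSSP routine's guarantee to $\tilde V_i$ to get $\|(I_m-\tilde C_i\tilde C_i^\dagger)\tilde V_i\|_F \le \eta\,\|\tilde V_i - (\tilde V_i)_{d_i}\|_F$, and finally bound $\|\tilde V_i - (\tilde V_i)_{d_i}\|_F \le \|\tilde V_i - U_iU_i^T\tilde V_i\|_F \le \|(I_m - U_iU_i^T)V_i\|_F$ using that $(I_m - Q_{i-1}Q_{i-1}^T)$ is a contraction. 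Care is needed in the \textsf{adaptCVOD} case to track how the rank bookkeeping $\sum_i d_i = r$ interacts with the index $d_i = \mathrm{rank}(U_i)$ assigned to each block, but this is bookkeeping rather than a genuine difficulty.
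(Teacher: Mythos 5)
Your overall architecture --- reduce to local residuals on each Voronoi set, compare each to $\|(I_m - U_iU_i^T)V_i\|_F^2$, and sum to recover $\mathcal{G}^* = \sum_i\|(I_m - U_iU_i^T)V_i\|_F^2$ --- matches the paper's, and you correctly identify Lemma~\ref{bound} as the key tool. But two steps in the middle do not hold as stated. First, your deflation inequality $\|(I_m - C_iC_i^\dagger)V_i\|_F \le \|(I_m - \tilde C_i\tilde C_i^\dagger)\tilde V_i\|_F$ goes the wrong way: since $\mathrm{span}(C_i) \subseteq \mathrm{span}([C_1 \cdots C_i]) = \mathrm{span}(Q_{i-1}) \oplus \mathrm{span}(\tilde C_i)$, one actually has $\|(I_m - \tilde C_i\tilde C_i^\dagger)\tilde V_i\|_F \le \|(I_m - C_iC_i^\dagger)V_i\|_F$, and the gap can be strict (take $V_i = [e_1,\, e_2]$, $Q_{i-1} = e_1$, $d_i = 1$: the deflated residual is $0$ while the local residual is $1$). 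Second, the constant $c$ you invoke --- ``depending only on the quality factor of the chosen CSSP routine'' --- does not exist under the theorem's hypotheses: the only assumption on $\mathcal{M}_{\textsf{CSSP}}$ is that it returns linearly independent columns, so there is no off-the-shelf $\eta$ with $\|(I_m - \tilde C_i\tilde C_i^\dagger)\tilde V_i\|_F \le \eta\|\tilde V_i - (\tilde V_i)_{d_i}\|_F$ to quote. (Relatedly, Lemma~\ref{lemma2} is stated only for the partition-based \textsf{DEIM} algorithms, so invoking it for a general $\mathcal{M}_{\textsf{CSSP}}$ would itself need justification.)

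The paper avoids both problems by never touching the deflated blocks and never assuming an approximation guarantee. It bounds $\|(I_m - CC^\dagger)A\|_F^2 \le \sum_i \|(I_m - C_iC_i^\dagger)V_i\|_F^2$ directly, writes $C_iC_i^\dagger = Q_iQ_i^T$, inserts $\hat U_i\hat U_i^T$ by the triangle inequality, and applies Lemma~\ref{bound} locally (with $A = V_i$, $C = C_i$, $r = d_i$) to get $\|\hat U_i\hat U_i^T - Q_iQ_i^T\|_F \le \|C_i^\dagger\|_2\,\|V_i - V_{i,d_i}\|_F = \|C_i^\dagger\|_2\,\|(I_m - \hat U_i\hat U_i^T)V_i\|_F$. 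This produces the data-dependent constant $\zeta = \max_i\bigl(1 + \|C_i^\dagger\|_2\|V_i\|_2\bigr)$, which is finite for any full-column-rank selection (guaranteed by Lemma~\ref{independent}) and requires no quality property of $\mathcal{M}_{\textsf{CSSP}}$. Replacing your comparison step with this local use of Lemma~\ref{bound} closes the argument; note also that both your route and the paper's actually yield a bound of order $\sqrt{\mathcal{G}^*}$ on the unsquared error, which is what the $\mathcal{O}(\mathcal{G}^*)$ in the statement is implicitly tracking.
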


\begin{proof}
For each $i = 1,\ldots,k$, let $C_i \in \mathbb{R}^{m \times d_i}$ be the submatrix of $C$ whose columns belong to $V_i$, and let $Q_iR_i = C_i$ be its QR-decomposition.  Define $\hat{U}_i \in \mathbb{R}^{m \times d_i}$, $i = 1,\ldots,k$, to be the matrix whose columns contain the top $d_i$ left singular vectors of $V_i$ i.e., the centroid of $V_i$.  Then
    \begin{eqnarray*}
        \|(I_m - CC^\dagger)A\|_F^2 &=& \sum_{i=1}^k\|(I_m - CC^\dagger)V_i\|_F^2\\
        &\le & \sum_{i=1}^k \|(I_m - C_iC_i^\dagger)V_i\|_F^2\\
        &=& \sum_{i=1}^k \|(I_m - Q_iQ_i^T)V_i\|_F^2\\
        &=&\sum_{i=1}^k \|(I_m - \hat{U}_i\hat{U}_i^T +\hat{U}_i\hat{U}_i^T - Q_iQ_i^T)V_i\|_F^2\\
        &\le&\sum_{i=1}^k \left ( \|(I_m - \hat{U}_i\hat{U}_i^T)V_i\|_F + \|(\hat{U}_i\hat{U}_i^T - Q_iQ_i^T)V_i\|_F \right )^2\\
        &\le& \sum_{i=1}^k\left ( \|(I_m - \hat{U}_i\hat{U}_i^T)V_i\|_F + \|(\hat{U}_i\hat{U}_i^T - Q_iQ_i^T)\|_F\|V_i\|_2 \right )^2\\
        &\le & \sum_{i=1}^k\left ( \|(I_m - \hat{U}_i\hat{U}_i^T)V_i\|_F + \|C_i^\dagger\|_2\|V_i - V_{i,d_i}\|_F\|V_i\|_2 \right )^2\\
    \end{eqnarray*}

The last line follows from invoking lemma \ref{bound}, where $V_{i,d_i}\in \mathbb{R}^{m \times n_i}$ denotes the best rank $d_i$ approximation to $V_i$ given by its truncated SVD.  Since $\|(I - \hat{U}_i\hat{U}_i^T)V_i\|_F = \|V_i - V_{i,d_i}\|_F$, this last gives
\begin{eqnarray*}
    \|(I_m - CC^\dagger)A\|_F^2 &\le& \sum_{i=1}^k \|(I_m - U_iU_i^T)V_i\|^2_F(1 + \|C_i^\dagger\|_2\|V_i\|_2)^2\\
    &\le& \zeta^2 \sum_{i=1}^k \|(I_m - U_iU_i^T)V_i\|^2_F\\
\end{eqnarray*}
where $\zeta \equiv \max_i \left (1 + \|C_i^\dagger\|_2\|V_i\|_2 \right ).$
Since this last result bounds the reconstruction error by a constant times the \textsf{CVOD} energy, the proof is complete.  
\end{proof}

We remark that the $\zeta$ term characterizes the local performance of the selected CSSP algorithm in terms of the conditioning of the selected columns.  This term could be used to guide the choice of CSSP algorithm to use; e.g., its form is similar to expressions found with strong rank-revealing QR-factorizations \cite{gu1996efficient}. We also note that the result is independent of the size of the column partition of $A$.

Our next theorem constructs an upper bound on the \textsf{CVOD} (\textsf{adpatCVOD}) energy at termination in terms of objects related to the input data matrix, $A$.

\begin{theorem}\label{energyError}
    Let $\{(V_i,\hat{U}_i)\}_{i=1}^k$ denote the Voronoi sets and centroids resulting from running either the \textsf{CVOD} or \textsf{adpatCVOD} algorithm on a matrix $A \in \mathbb{R}^{m \times n}$ with target rank $0<r<\mbox{rank}(A)$.  Let $d = \{d_i\}_{i=1}^k$ denote the centroid dimensions at termination.  Then
    $$\sum_{i=1}^k \sum_{x \in V_i} \|(I_m - \hat{U}_i\hat{U}_i^T)x\|_2^2 \le \|A - A_r\|_F^2 + \left (1 - \frac{1}{L^*}\right )\|A_r\|_F^2   $$
    where $L^* = \sup \{ \lceil \frac{r}{d_i} \rceil \; | \; i = 1,\ldots,k \}$ and $A_r \in \mathbb{R}^{m \times n}$ denotes the best rank-$r$ approximation to $A$ given by the truncated SVD.
\end{theorem}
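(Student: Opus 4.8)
The plan is to bound the energy functional $\mathcal{G}^* = \sum_{i=1}^k \sum_{x \in V_i}\|(I_m - \hat U_i\hat U_i^T)x\|_2^2$ by relating each local reconstruction error to the global rank-$r$ reconstruction error of $A$. The key observation is that for each Voronoi set $V_i$, the centroid $\hat U_i$ contains the top $d_i$ left singular vectors of $V_i$, so $\|(I_m - \hat U_i\hat U_i^T)V_i\|_F^2 = \|V_i - V_{i,d_i}\|_F^2$ is the best rank-$d_i$ error for that block. I would first rewrite $\mathcal{G}^* = \sum_i \|V_i - V_{i,d_i}\|_F^2$ and then split this: $\|V_i - V_{i,d_i}\|_F^2 = \|V_i\|_F^2 - \|V_{i,d_i}\|_F^2$, so that $\mathcal{G}^* = \|A\|_F^2 - \sum_i \|V_{i,d_i}\|_F^2$, using that the $V_i$ partition the columns of $A$ so $\sum_i \|V_i\|_F^2 = \|A\|_F^2$. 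The task then reduces to a \emph{lower} bound on $\sum_i \|V_{i,d_i}\|_F^2$, i.e.\ showing the blocks capture enough energy.

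Next I would compare $\sum_i \|V_{i,d_i}\|_F^2$ against $\|A_r\|_F^2$. Since $A_r = U_rU_r^TA$ is the projection of $A$ onto its top-$r$ left singular subspace, we have $\|A_r\|_F^2 = \sum_i \|U_rU_r^T V_i\|_F^2$, and $U_rU_r^TV_i$ has rank at most $r$. The natural move is: $\|V_{i,d_i}\|_F^2 \ge \frac{d_i}{r}\|(V_i)_r\|_F^2 \ge \frac{d_i}{r}\|U_rU_r^TV_i\|_F^2$, where the first inequality says the top $d_i$ squared singular values of $V_i$ make up at least a $d_i/r$ fraction of the top $r$ of them (true because the top $d_i$ are the largest), and the second uses that $(V_i)_r$ is the \emph{best} rank-$r$ approximation of $V_i$ while $U_rU_r^TV_i$ is \emph{a} rank-$\le r$ approximation. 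Summing over $i$ and using $d_i/r \ge 1/\lceil r/d_i\rceil \ge 1/L^*$ gives $\sum_i \|V_{i,d_i}\|_F^2 \ge \frac{1}{L^*}\|A_r\|_F^2$. Plugging back in: $\mathcal{G}^* = \|A\|_F^2 - \sum_i\|V_{i,d_i}\|_F^2 \le \|A\|_F^2 - \frac{1}{L^*}\|A_r\|_F^2 = \|A - A_r\|_F^2 + \|A_r\|_F^2 - \frac{1}{L^*}\|A_r\|_F^2 = \|A - A_r\|_F^2 + (1 - \tfrac{1}{L^*})\|A_r\|_F^2$, which is the claim.

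I expect the main obstacle to be justifying the chain $\|V_{i,d_i}\|_F^2 \ge \frac{d_i}{r}\|U_rU_r^TV_i\|_F^2$ cleanly. The subtle point is the first step: writing $\sigma_1(V_i)^2 \ge \cdots \ge \sigma_{d_i}(V_i)^2 \ge \cdots$, one needs $\sum_{j=1}^{d_i}\sigma_j^2 \ge \frac{d_i}{r}\sum_{j=1}^{r}\sigma_j^2$, which holds because the average of the largest $d_i$ values is at least the average of the largest $r$ values. The second step requires care only in noting $\mathrm{rank}(U_rU_r^TV_i)\le r$ and invoking the Eckart--Young optimality of $(V_i)_r$ in Frobenius norm; I would also need $\mathrm{rank}(V_i)\ge d_i$ so that $(V_i)_{d_i}$ is genuinely the truncation (if $\mathrm{rank}(V_i)<d_i$ the bound is only easier). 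One should double-check the edge cases where some $d_i=0$ after the adaptive reduction, but those blocks contribute zero to both sides and can be discarded. Everything else is bookkeeping with the Pythagorean identity $\|B\|_F^2 = \|B_s\|_F^2 + \|B - B_s\|_F^2$ for truncated SVDs.
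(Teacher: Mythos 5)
Your proposal is correct, and its overall skeleton coincides with the paper's: both reduce the claim to the identity $\mathcal{G}^* = \|A\|_F^2 - \sum_i\|\hat U_i\hat U_i^TV_i\|_F^2$ and then establish the lower bound $\sum_i\|\hat U_i\hat U_i^TV_i\|_F^2 \ge \frac{1}{L^*}\|\mathcal{U}_r\mathcal{U}_r^TA\|_F^2 = \frac{1}{L^*}\|A_r\|_F^2$ before rearranging. Where you differ is in how that central inequality is proved. The paper partitions the global singular basis $\mathcal{U}_r$ into $L_i = \lceil r/d_i\rceil$ mutually orthogonal blocks of rank at most $d_i$, invokes the Eckart--Young optimality of $\hat U_i$ at rank $d_i$ against each block separately, and sums, obtaining $\|\hat U_i\hat U_i^TV_i\|_F^2 \ge \frac{1}{L_i}\|\mathcal{U}_r\mathcal{U}_r^TV_i\|_F^2$. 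You instead compare singular-value sums of $V_i$ itself: the prefix-averaging inequality $\sum_{j\le d_i}\sigma_j(V_i)^2 \ge \frac{d_i}{r}\sum_{j\le r}\sigma_j(V_i)^2$ together with Eckart--Young at rank $r$ gives $\|V_{i,d_i}\|_F^2 \ge \frac{d_i}{r}\|\mathcal{U}_r\mathcal{U}_r^TV_i\|_F^2$. Since $\frac{d_i}{r} \ge \frac{1}{\lceil r/d_i\rceil}$, your intermediate constant is at least as sharp as the paper's (strictly sharper when $d_i \nmid r$), though both are then relaxed to $\frac{1}{L^*}$ to match the stated bound; your route also avoids the slightly awkward block-partition construction and the paper's under-justified step ``since $\hat U_i \neq \mathcal{U}_{r,i_l}$.'' One small correction: a block with $d_i = 0$ would contribute $\|V_i\|_F^2$ to the energy, not zero, but this case does not occur since \textsf{adaptCVOD} discards such sets, so your argument stands as written.
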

\begin{proof}
    Let $\mathcal{U}_r \in \mathbb{R}^{m \times r}$ be the matrix containing the top $r$ left singular vectors of the matrix $A$.  Since each $\hat{U}_i$ has rank $d_i$, we may partition the columns of $\mathcal{U}_r$ as
$$\mathcal{U}_r = [\mathcal{U}_{r,i_1} \cdots \mathcal{U}_{r,i_{L_i}}],$$
where $\mbox{rank}(\mathcal{U}_{r,i_s})\le d_i$ for each $i_l$, $l = 1,\ldots, L_i$, where $L_i = \lceil \frac{r}{d_i} \rceil.$
Since $\hat{U}_i \neq \mathcal{U}_{r,i_l}$ for every $l$, we have
$$\|(I_m - \hat{U}_i\hat{U}_i^T)V_i\|_F^2 \le \|(I - \mathcal{U}_{r,i_l}\mathcal{U}_{r,i_l}^T)V_i\|_F^2,\quad \mbox{for every } l.$$
And since $U_{r,i_l}^TU_{r,i_l} = I$, for each $l$, this implies that
$$\|\hat{U}_i\hat{U}_i^TV_i\|_F^2 \ge \|\mathcal{U}_{r,i_l}\mathcal{U}_{r,i_l}^TV_i\|_F^2 \mbox{ for each }l.$$
Thus, 
$$\sum_{l=1}^{L_i}\|\hat{U}_i\hat{U}_i^TV_i\|_F^2 = L_i\|\hat{U}_i\hat{U}_i^TV_i\|_F^2 \ge \sum_{l=1}^{L_i}\|\mathcal{U}_{r,i_l}\mathcal{U}_{r,i_l}^TV_i\|_F^2 = \|\mathcal{U}_r\mathcal{U}_r^TV_i\|_F^2.$$

Note that we can repeat this construction for each $V_i$, $i = 1,\ldots, k$.  Let $L^* = \sup \{L_i \; | \; i = 1,\ldots,k\}$.  Then the previous shows $L^*\|\hat{U}_i\hat{U}_i^TV_i\|_F^2 \ge \|U_rU_r^TV_i\|\quad \forall i.$  As a result, we have
\begin{eqnarray*}
    \sum_{i=1}^k\|(I_m - \hat{U}_i\hat{U}_i^T)V_i\|_F^2 &=& \sum_{i=1}^k \left ( \|V_i\|_F^2 - \|\hat{U}_i\hat{U}_i^TV_i\|_F^2 \right )\\
    &=& \|A\|_F^2 - \sum_{i=1}^k\|\hat{U}_i\hat{U}_i^TV_i\|_F^2\\
    &\le&\|A\|_F^2 - \frac{1}{L^*}\sum_{i=1}^k\|\mathcal{U}_r\mathcal{U}_r^TV_i\|_F^2\\
    &=& \|A\|_F^2 - \frac{1}{L^*}\|\mathcal{U}_r\mathcal{U}_r^TA\|_F^2\\
    &=& \|A\|_F^2 - \frac{1}{L^*}\|A_r\|_F^2\\
    &=& \|A\|_F^2 - \|A_r\|_F^2 - \frac{1}{L^*}\|A_r\|_F^2 + \|A_r\|_F^2\\
    &=& \|A - A_r\|_F^2 + \left (1 - \frac{1}{L^*}\right )\|A_r\|_F^2 \\
\end{eqnarray*}

\end{proof}

By combining Theorem \ref{csspError} and \ref{energyError}, we arrive at the following ID reconstruction error bounds.

\begin{theorem}
    Let $A \in \mathbb{R}^{m \times n}$, rank($A$) = $\rho$, and $0<r<\rho$, $0<k<n$ be integers.  Define $L^*$ as in Theorem \ref{energyError} and $\zeta$ as the proof of Theorem \ref{csspError}.  If $C \in \mathbb{R}^{m \times r}$ is the output from \textsf{CVOD+CSSP} (\textsf{adaptCVOD+CSSP}), then
    $$\|(I_m - CC^\dagger)A\|_F \le \zeta \left (\|A - A_r\|_F^2 + \left (1 - \frac{1}{L^*}\right )\|A_r\|_F^2   \right )^{1/2}.$$
\end{theorem}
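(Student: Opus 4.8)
The plan is to chain Theorems \ref{csspError} and \ref{energyError} directly; no new machinery is needed. From the body of the proof of Theorem \ref{csspError} — before the final constant is extracted — we already have the intermediate estimate
$$\|(I_m - CC^\dagger)A\|_F^2 \le \zeta^2 \sum_{i=1}^k \|(I_m - \hat{U}_i\hat{U}_i^T)V_i\|_F^2,\qquad \zeta = \max_i\left(1 + \|C_i^\dagger\|_2\|V_i\|_2\right),$$
where $\hat{U}_i$ is the centroid of $V_i$ at termination and $C_i$ is the block of $C$ selected from $V_i$. The first step is to recognize that the right-hand sum is exactly the \textsf{CVOD}/\textsf{adaptCVOD} energy functional evaluated at the terminal Voronoi sets and centroids, since $\|(I_m - \hat{U}_i\hat{U}_i^T)V_i\|_F^2 = \sum_{x \in V_i}\|(I_m - \hat{U}_i\hat{U}_i^T)x\|_2^2$, so that $\sum_i \|(I_m - \hat{U}_i\hat{U}_i^T)V_i\|_F^2 = \mathcal{G}^*$.

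The second step is to substitute the bound from Theorem \ref{energyError}, namely $\mathcal{G}^* \le \|A - A_r\|_F^2 + (1 - 1/L^*)\|A_r\|_F^2$ with $L^* = \sup_i\lceil r/d_i \rceil$ and $d_i$ the centroid dimensions at termination. This yields
$$\|(I_m - CC^\dagger)A\|_F^2 \le \zeta^2\left(\|A - A_r\|_F^2 + \left(1 - \frac{1}{L^*}\right)\|A_r\|_F^2\right),$$
and taking nonnegative square roots gives the stated inequality.

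The only real obstacle is bookkeeping consistency between the two source results: one must verify that the terminal objects $\{(V_i,\hat{U}_i)\}$, their ranks $d_i$, and the set count $k$ referenced in the proof of Theorem \ref{csspError} coincide with those appearing in Theorem \ref{energyError}. In particular, when \textsf{adaptCVOD} has collapsed one or more Voronoi sets, the effective count drops to some $\tilde{k} \le k$ and the $d_i$ are the terminal centroid ranks; once the sums in both theorems are indexed over these same terminal sets, the composition is immediate. I would also remark briefly that since $\zeta \ge 1$ always, and $L^* = 1$ only in the degenerate single-set case (whereupon the $\|A_r\|_F^2$ term disappears and one recovers a bound of the familiar form $\zeta\|A - A_r\|_F$), the extra term $(1 - 1/L^*)\|A_r\|_F^2$ is precisely the price paid for splitting the $r$-dimensional dominant subspace across several low-dimensional centroids.
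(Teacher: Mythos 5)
Your proposal is correct and matches the paper's own (implicit) argument exactly: the paper obtains this theorem simply by combining the intermediate bound $\|(I_m - CC^\dagger)A\|_F^2 \le \zeta^2\,\mathcal{G}^*$ from the proof of Theorem \ref{csspError} with the energy bound of Theorem \ref{energyError} and taking square roots. Your additional bookkeeping about the terminal Voronoi sets and the interpretive remark are sound but not needed beyond what the paper does.
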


\noindent \textbf{Remark:  } Although this result has not been optimized, it still presents an interesting bound.  In particular, it consists of two terms that bring together elements from the CSSP algorithm and the data matrix, $A$.  The $\zeta$ term, as mentioned earlier, quantifies the local performance of the chosen CSSP algorithm in terms of the conditioning of the columns selected from each $V_i$.  The remaining term relates the ideal $r$-dimensional reconstruction error of $A$ to the partitioning algorithm's energy functional value at termination.  Of note is that the bound is independent of $k$, the number of final Voronoi sets.

\section{Conclusion}
In this work, we present generalizations of the \textsf{CVOD+DEIM}/\textsf{adaptCVOD+DEIM} algorithms introduced in \cite{emelianenko2024adaptive} designed to address the column subset selection problem (CSSP).  Referred to as \textsf{CVOD+CSSP}/\textsf{adaptCVOD+CSSP}, these new frameworks pair \textsf{CVOD}/ \textsf{adaptCVOD} with any column-selection algorithm whose output gives linearly independent columns.  We establish a quantitative relationship between the final CSSP solution and the optimality of the partitioning algorithm.  Furthermore, we develop bounds on the \textsf{CVOD}/\textsf{adaptCVOD} energy functional values at termination in terms of objects from the parent data matrix.  This last may be of independent interest in the model order reduction community \cite{du2003centroidal}, \cite{burkardt2006centroidal}.  These results allow one to interpret the CSSP error in terms of the partition quality and the local performance of the chosen CSSP method.  This result reflects the belief that the ID reconstruction error resulting from a partitioned-based CSSP procedure should improve with the quality of the underlying partition.  

Topics for future work include developing analogous generalizations using the \textsf{VQPCA} and \textsf{adaptVQPCA} partitioning algorithms, as well as conducting a numerical study that investigates the performance of \textsf{CVOD+CSSP}/\textsf{adaptCVOD+CSSP} when paired with several well-known column-selection methods.

%


\bibliographystyle{achemso}
\bibliography{references}

\end{document}